\newcommand{\what}{\widehat}%
\newcommand{\wtilde}{\widetilde}%
\newcommand{\R}{\mathbb R}%
\newcommand{\C}{\mathbb C}%
\newcommand{\N}{\mathbb N}%
\newtheorem{theorem}{Theorem}[section]
\newtheorem{proposition}[theorem]{Proposition}
\newtheorem{corollary}[theorem]{Corollary}
\theoremstyle{definition}
\newtheorem{definition}[theorem]{Definition}
\theoremstyle{definition}
\newtheorem{remark}[theorem]{Remark}
\numberwithin{equation}{section}
\begin{document}
\title{An analogue of Bochner's theorem for Damek-Ricci spaces}
\subjclass[2000]{Primary 43A85; Secondary 22E30} \keywords{Bochner's theorem, positive definite functions, radially positive definite functions}

\author{Sanjoy Pusti}
\address[Sanjoy Pusti]{Mathematics Research Unit; University of Luxembourg, Campus kirchberg; 6, rue Richard Coudenhove-Kalergi;
L-1359, Luxembourg.}
\email{sanjoy.pusti@uni.lu}

\begin{abstract}
We  characterize the image of radial  positive measures $\theta$'s on a harmonic $NA$ group  $S$ which satisfies $\int_S\phi_0(x)\,d\theta(x)<\infty$ under the spherical transform, where $\phi_0$ is the elementary spherical function.  
\end{abstract}
\maketitle
\section{Introduction}
A continuous function $f$ on $\mathbb R$ is said to be positive
definite if for any real numbers $x_1, \cdots, x_m$ and complex
numbers $\xi_1, \cdots, \xi_m$ the following holds
\begin{eqnarray*}
\sum_{k, j=1}^{m}f(x_k -x_j)\xi_k \overline{\xi_j} \geq 0.
\end{eqnarray*}
This condition is equivalent to$$\int_{\mathbb R}f(x)
 (\phi*\phi^{*})(x)\,dx \geq 0$$ for all $\phi\in
C_c^{\infty}(\mathbb R)$, where $\phi^\ast(x)=\overline{\phi(-x)}$. A celebrated theorem of S. Bochner states that a positive definite function is the Fourier transform of some finite positive measure on $\R$. Therefore it characterizes the image of finite positive measures under the Fourier transform. Also it gives an integral representation of the positive definite functions. This theorem has been extended to locally compact Abelian groups.  P. Graczyk and J.-J. L{\oe}b characterizes image of $K$-biinvariant finite positive measures on $G$ where $G$ is a connected, noncompact, complex semisimple Lie group with finite centre  and $K$ is a maximal compact subgroup of $G$ (see \cite{Gr}). 

 For any $NA$ group (also known as Damek-Ricci space) $S$ we consider  two types of problems.

\noindent(1) The first one is to get an integral representation of radially positive definite  functions on $S$. We prove that such functions are given by positive measures on $\R\cup i\R$, such that the measure on $\R$ is finite and the measure $\mu$ on $i\R$ satisfies $\int_\R e^{a|\lambda|}d\mu(\lambda)<\infty$ for all $a>0$. This is an analogue of M. G. Krein's theorem for evenly positive definite functions on $\R$ (see \cite[p. 196]{Gelfand}).

\noindent (2) In the second one we consider the problem to characterize the image of radial  positive measures $\theta$'s which satisfy $\int_S\phi_0(x)\,d\theta(x)<\infty$ under the spherical  transform. For this, first we show that the spherical transform of such measures are even, continuous, bounded functions on $\R$ and satisfy certain  positive definite like condition.  Conversely we prove that  any even, continuous, bounded  function on $\R$ which satisfies such positive definite like condition  is the spherical transform of some radial, positive measure $\theta$ on $S$. This measure $\theta$ satisfies $\int_S\phi_0(x)\, d\theta(x)<\infty$.  Then we prove that the image set is a subset of the positive definite functions on $\R$. This positive definite like condition can alternatively be stated as, the elements of the image space are positive linear functionals on the Banach algebra $\left(L^1(\R, |c(\lambda)|^{-2}d\lambda)_e, \odot\right)$, whereas in the classical Bochner's theorem the elements of the image space are positive linear functionals on the Banach algebra $(L^1(\R),\ast)$. The condition on the measure $\theta$ (i.e. $\int_S\phi_0(x)\,d\theta(x)<\infty$) is due to technical reason.  If a measure $\theta$ is finite then it satisfies $\int_S\phi_0(x)\,d\theta(x)<\infty$. We guess the image space for finite, positive, radial measures under the spherical transform and write it as a conjecture. 
We refer  \cite{B1,B2, Pus, AS1} for further study in this literature.

We are grateful to Professors J. Faraut, R. P. Sarkar and S. Thangavelu for many helpful discussions, comments and suggestions. 
\section{Preliminaries}
In this section we explain the required preliminaries for $NA$ groups, mainly from \cite{ADY}.
Let $\mathfrak n$ be a two step nilpotent Lie algebra with the inner product $\langle,\rangle$. Let $\mathfrak{z}$ be the centre of $\mathfrak{n}$ and $\mathfrak{p}$ be the orthogonal complement of $\mathfrak{z}$ in $\mathfrak{n}$. We call the Lie algebra $\mathfrak{n}$ an $H$-type algebra if for each $Z\in\mathfrak{z}$ the map $J_Z:\mathfrak{p}\rightarrow\mathfrak{p}$ defined by $$\langle J_ZX,Y\rangle=\langle [X,Y],Z\rangle$$ satisfies $J_Z^2=-|Z|^2I_{\mathfrak p}$, where $I_{\mathfrak p}$ is the identity operator on $\mathfrak{p}$. A connected, simply connected Lie group $N$ is called $H$-type group if its Lie algebra is an $H$-type Lie algebra. Since $\mathfrak{n}$ is nilpotent the exponential map is a diffeomorphism from $\mathfrak{n}$ to $N$. Therefore any element $n$ of $N$ can be expressed as $n=\exp(X+Z)$ for some $X\in \mathfrak{p}, Z\in \mathfrak{z}$. Hence we can parametrize elements of $N$ by the pairs $(X,Z), X\in \mathfrak{p}, Z\in \mathfrak{z}$. It follows from Campbell-Baker-Hausdorff formula that the group law on $N$ is given by $$(X, Z)(X',Z')=(X+X', Z+Z'+\frac 12[X,X']).$$

Let $A=\R^+=(0,\infty)$ and $N$ an $H$-type group. For $a\in A$ we define a dilation $\delta_a:N\rightarrow N$ by $\delta_a\left((X,Z)\right)=\left(a^{1/2}X, aZ\right)$. Let $S=NA$ be the semi direct product of $N$ and $A$ under the dilation. Thus the multiplication on $S$ is given by $$(X,Z,a)(X',Z',a')=\left(X+a^{1/2}X', Z+aZ'+\frac 12 a^{1/2}[X,X'], aa'\right).$$ Then $S$ is a solvable, connected and simply connected Lie group having Lie algebra $\mathfrak{s}=\mathfrak{p}\oplus\mathfrak{z}\oplus \R$ with Lie bracket $$[(X,Z,l),(X',Z',l')]=\left(\frac 12 lX'-\frac 12 l'X, lZ'-l'Z+[X,X'], 0\right).$$We note that for any $Z\in\mathfrak{z}$ with $|Z|=1$, we have $J_Z^2=-I_{\mathfrak p}$. Therefore $J_Z$ defines a complex structure on $\mathfrak{p}$. Hence $\dim\mathfrak{p}$ is even. Let $\dim\mathfrak{p}=m$ and dim$\mathfrak{z}=k$. Then $Q=\frac m2 +k$ is called the {\em homogenous dimension} of $S$. We also use the symbol $\rho$ for $\frac Q2$ and $n$ for $m+k+1=\dim \mathfrak{s}$.

The group $S$ is equipped with the left-invariant Riemannian metric  induced by $$\langle(X,Z,l), (X',Z',l')\rangle=\langle X,X'\rangle +\langle Z,Z'\rangle +ll'$$ on $\mathfrak{s}$. The associated left invariant Haar measure $dx$ on $S$ is given by $a^{-Q-1}dXdZda$ where $dX, dZ, da$ are the Lebesgue measures on $\mathfrak{p},\mathfrak{z}$ and $\R^+$ respectively. Also the following integral formula holds:\begin{equation}
\int_Sf(x)\,dx=C_1\int_\R\int_N f(na_t)e^{2\rho t}\,dt\,dn \label{Iwasawa}
\end{equation}
The constant  $C_1$ depends on the normalization of the Haar measures involved.

The group $S$ can also be realized as the unit ball $$B(\mathfrak s)=\{(X,Z,l)\in\mathfrak{s}\mid |X|^2 +|Z|^2+ l^2<1\}$$ via Caley transform $C:S\rightarrow B(\mathfrak s)$ (see \cite[(1.12)]{ADY} for details). For $x\in S$ we let $r(x)=d(C(x), 0)$. A function $f$ on $S$ is called {\em radial} if $f(x)=f(r(x))$ for all $x\in S$. For a suitable function $f$ on $S$, its {\em radial component} is defined by $$f^\sharp(x)=\frac{\Gamma(\frac n2)}{2\pi^{\frac n2}}\int_{\partial B(\mathfrak s)} f(r(x)\sigma)\,d\sigma \text{ for all } x\in S.$$It is clear that if $f$ is radial function, then its radial component $f^\sharp(x)=f(x)$ for all $x\in S$. The convolution between two radial functions $f,g$ on $S$ is given by $(f\ast g)(x)=\int_S f(y^{-1})g(yx)\,dy$. Then it is easy to check that $f\ast g$ is a radial function. Also for two suitable radial functions $f$ and $g$ we have $f\ast g=g\ast f$.

Let $\mathbb D(S)^\sharp$ be the algebra of invariant differential operators on $S$ which are radial i.e., the operators which commutes with the operator $f\mapsto f^\sharp$. Then $\mathbb D(S)^\sharp$ is a polynomial algebra with single generator, the Laplace-Beltrami operator $\mathcal L$. A function $\phi$ on $S$ is called {\em spherical function} if $\phi(e)=1$ and $\phi$ is a radial eigenfunction of $\mathcal L$. All spherical functions are given by (see \cite{ADY}) $$\phi_\lambda(x)=\left(a(x)^{\rho-i\lambda}\right)^\sharp, \lambda\in\C$$ where $a(x)=e^t$ if $x=ne^t$.
Then it follows that $$\mathcal L\phi_\lambda=-(\lambda^2+\rho^2)\phi_\lambda \text{ for all }\lambda\in\C.$$ Also $\phi_\lambda(x)=\phi_{-\lambda}(x)$, $\phi_\lambda(x)=\phi_\lambda(x^{-1})$ and $\phi_\lambda(e)=1$. The spherical functions satisfy the basic estimate (see \cite{ADY}):
$$\phi_{i(\frac 1p-\frac 12)Q}(x)\asymp \left\{\begin{array}{lll}
e^{-\frac{Q}{p'}r(x)} & \text{ if } & 1\leq p<2\\ 
(1+r(x))e^{-\rho r(x)} & \text{ if } & p=2.
\end{array}\right.$$
Here $A\asymp B$ means there exists positive constants $C_1, C_2$ such that $C_1B\leq A\leq C_2B$. Also we have $$|\phi_\lambda(x)|\leq \phi_{i\mu}(x) \text{ for }|\Im \lambda|\leq \mu$$ where $\Im \lambda$ denotes the imaginary part of $\lambda$ and $\phi_{i\rho}\equiv 1$. Therefore if $\lambda\in\R$ then $|\phi_\lambda(x)|\leq \phi_0(x)$ for all $x\in S$.

For a suitable radial function $f$ on $S$, its {\em spherical transform} is defined by $$\what{f}(\lambda)=\int_S f(x)\phi_\lambda(x)\,dx.$$Then $\what{f\ast g}(\lambda)=\what{f}(\lambda)\what{g}(\lambda)$.

Let $C_c^\infty(S)^\sharp$ be the set of compactly supported radial $C^\infty$ functions on $S$. Also for $0<p\le 2$ the $L^p$-Schwartz space is defined by $$\mathcal C^p(S)^\sharp=\left\{f\in C^\infty(S)^\sharp\mid \sup_{x\in S}(1+r(x))^N\left|\mathcal L^t f(x)\right|e^{\frac 2p\rho r(x)}<\infty, \forall N, t\in \N\cup\{0\}\right\}.$$ We recall that $C_c^\infty(S)^\sharp$ is dense in $\mathcal C^p(S)^\sharp$.

For $R>0$  the Paley-Wiener space $PW_{R}(\mathbb C)$ is the set of
all entire functions $h:\mathbb C\rightarrow \mathbb C$ satisfying
for each $N \in \mathbb N$
$$|h(\lambda)|\leq C_{N}(1+|\lambda|)^{-N}e^{R|\Im\lambda|}\text {
for all } \lambda\in \C$$ for some constant $C_{N}>0$ depending on
$N$. Let $PW(\C)=\cup_{R>0}PW_R(\C)$. We shall denote the set of all even functions in
$PW(\mathbb C)$  by $PW(\mathbb C)_{e}$.

For $p\in (0, 2]$ we define $\gamma_p=(2/p-1)$. We consider the strip $S_p=\{z\in \C\mid |\Im
z|\leq \gamma_p\rho\}$ and  note that when $p=2$ then the strip becomes the line $\R$. For $0<p<2$ let $S_p^\circ$ and $\partial S_p$
respectively be the interior and boundary of the strip.

We define
$\mathcal S(S_p)$ to be the set of all functions
$h:S_p\rightarrow \mathbb C$ which are continuous on $S_p$,
holomorphic on $S_p^\circ$ (when $p=2$ then the function is simply
$C^\infty$ on $S_2=\R$) and satisfies $\sup_{\lambda\in
S_p}(1+|\lambda|^{r})|\frac{d^{m}}{d\lambda^{m}}h(\lambda)|<
\infty$, for all $r,m\in \mathbb N\cup\{0\}$. Let $\mathcal
S(S_p)_e$  denotes the subspaces of
$\mathcal S(S_p)$ consisting of even  functions. Topologized by the seminorms above it  can be
verified that $\mathcal S(S_p)$ and $\mathcal S(S_p)_e$  are Fr\'{e}chet spaces. Also let $C_c^\infty(\R)_e, L^1(\R,d\lambda)_e$ and $L^1(\R, |c(\lambda)|^{-2}d\lambda)_e$ be the subspaces of even functions of $C_c^\infty(\R)$, $L^1(\R,d\lambda)$ and $L^1(\R, |c(\lambda)|^{-2}d\lambda)$ respectively. They are equipped with the subspace topologies. From the basic estimates of $\phi_\lambda$ it follows that the domain of spherical transform of a function in $C_c^\infty(S)^\sharp$ is $\C$ and that of a function in $\mathcal  C^p(S)^\sharp$ is  $S_p$.
We have the following Paley-Wiener and $L^p$-Schwartz space isomorphism theorem (see \cite{ADY}).
\begin{theorem}\label{sch-iso-na}
 The map $f\mapsto
\widehat{f}$  is a topological isomorphism between
$C_c^\infty(S)^\sharp$ and $PW(\mathbb C)_{e}$ and also between $\mathcal  C^p(S)^\sharp$ and $\mathcal S(S_p)_e$.
\end{theorem}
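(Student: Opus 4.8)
The plan is to factor the spherical transform through the one--dimensional Euclidean Fourier transform by means of the Abel transform, and then to import the classical Paley--Wiener and Schwartz theorems on $\R$. For a radial function $f$ on $S$ introduce the (suitably normalised) Abel transform
\begin{equation*}
\mathcal A f(t)=e^{\rho t}\int_N f(n a_t)\,dn ,\qquad t\in\R ,
\end{equation*}
which is automatically even. Since $f$ is radial and $S$ is harmonic, the radialization $g\mapsto g^{\sharp}$ is self--adjoint for $dx$ (in geodesic polar coordinates about a point the Haar measure has a purely radial density), so that $\int_S f\,\phi_\lambda\,dx=\int_S f(x)\,a(x)^{\rho-i\lambda}\,dx$. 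Inserting this into \eqref{Iwasawa} and applying Fubini's theorem --- possibly after a change of variables in the $N$--integral using that conjugation by $a_t$ equals the dilation $\delta_{e^t}$ and hence rescales the Haar measure of $N$ by $a^{Q}=a^{2\rho}$ --- gives
\begin{equation*}
\widehat f(\lambda)=\int_\R \mathcal A f(t)\,e^{-i\lambda t}\,dt ,
\end{equation*}
that is, $\widehat{\,\cdot\,}=\mathcal F\circ\mathcal A$ on radial functions, where $\mathcal F$ is the Fourier transform on $\R$. Everything is thereby reduced to the mapping properties of $\mathcal A$.

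The main step is to show that $\mathcal A$ is a topological isomorphism of $C_c^\infty(S)^{\sharp}$ onto $C_c^\infty(\R)_e$ and of $\mathcal C^p(S)^{\sharp}$ onto the weighted even Schwartz space
\begin{equation*}
\mathcal S^{\,p}(\R)_e=\Bigl\{\, g\in C^\infty(\R)_e \ \Big|\ \sup_{t\in\R}(1+|t|)^{r}e^{\gamma_p\rho|t|}\bigl|\tfrac{d^{m}}{dt^{m}}g(t)\bigr|<\infty\ \ \forall\, r,m\in\N\cup\{0\}\,\Bigr\}.
\end{equation*}
Injectivity and the recovery of $f$ from $\mathcal A f$ follow from the inversion formula for the Abel transform on a Damek--Ricci space: $\mathcal A^{-1}$ is a composition of two explicit operators of Weyl fractional--integral type, one in the variable $\cosh t$ and one in $\cosh(t/2)$, with exponents governed by $m=\dim\mathfrak p$ and $k=\dim\mathfrak z$ (the two half--integer steps reflect the dilation $\delta_a(X,Z)=(a^{1/2}X,aZ)$, equivalently the factors $\sinh^{k}t$ and $\sinh^{m}(t/2)$ in the density). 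These operators are local up to the fractional--integral pieces, so $f$ is supported in $\{\,x: r(x)\le R\,\}$ if and only if $\mathcal A f$ is supported in $[-R,R]$, which yields the $C_c^\infty$ statement. For the Schwartz statement one checks, with \eqref{Iwasawa} and the known growth of the density, that $\mathcal A$ and $\mathcal A^{-1}$ transport the defining seminorms of $\mathcal C^p(S)^{\sharp}$ into those of $\mathcal S^{\,p}(\R)_e$ and back: a decay rate $e^{-(2/p)\rho\,r(x)}$ together with all derivatives corresponds exactly to a decay rate $e^{-\gamma_p\rho|t|}$ together with all derivatives, the gap $\rho|t|$ being produced by the normalising factor $e^{\rho t}$ against the $N$--integration. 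The continuity of $\mathcal A$ and of $\mathcal A^{-1}$ for the Fr\'{e}chet topologies is then a consequence of these seminorm inequalities.

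It remains to apply the one--dimensional theory to $\mathcal F$. The classical Paley--Wiener theorem on $\R$ shows that $\mathcal F\colon C_c^\infty(\R)_e\to PW(\C)_e$ is a topological isomorphism; composing with the $C_c^\infty$ half of the previous step gives the first assertion. For the second, one uses the Paley--Wiener--Schwartz theorem for a strip: $\mathcal F$ is a topological isomorphism of $\mathcal S^{\,p}(\R)_e$ onto $\mathcal S(S_p)_e$, the exponential weight $e^{\gamma_p\rho|t|}$ corresponding exactly to the requirements that the image be holomorphic on $S_p^{\circ}$, continuous up to $\partial S_p$, and rapidly decreasing with all derivatives on each horizontal line of $S_p$ (for $p=2$ the weight is trivial and this is the usual Schwartz isomorphism on $\R$). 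Composing with the Schwartz half of the previous step completes the proof.

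I expect the technical core, and the main obstacle, to be the second step: pinning down the exact exponent $\gamma_p\rho$, neither larger nor smaller, for $\mathcal A f$ requires estimating $\int_N f(n a_t)\,dn$ and all its $t$--derivatives uniformly in $t$, while the converse direction requires controlling the non--local Weyl fractional integrals appearing in $\mathcal A^{-1}$ on the weighted Schwartz scale. An essentially equivalent route, close to the one of \cite{ADY}, avoids the explicit inverse Abel transform by identifying $\phi_\lambda(a_t)$ with a Jacobi function $\phi^{(\alpha,\beta)}_\lambda$ of parameters $\alpha=\tfrac{m+k-1}{2}$, $\beta=\tfrac{k-1}{2}$ (after the standard rescaling of the variable) and invoking the Paley--Wiener and $L^p$--Schwartz isomorphism theorems for the Jacobi transform; the analysis is then relocated to the Jacobi setting but is of the same nature.
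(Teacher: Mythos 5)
The paper does not actually prove Theorem \ref{sch-iso-na}: it is quoted from \cite{ADY}, and the text immediately following it runs your reduction in the opposite direction, \emph{deducing} the Abel-transform isomorphisms from the theorem rather than the other way round. Your outline is nevertheless the standard proof of the cited result --- the factorization $\widehat{\,\cdot\,}=\mathcal F\circ\mathcal A$ followed by the one-dimensional Paley--Wiener and Schwartz theorems, equivalently the reduction to Jacobi analysis with parameters $\alpha=\frac{m+k-1}{2}$, $\beta=\frac{k-1}{2}$, which is essentially the route of \cite{ADY} building on \cite{Flensted-Jensen} --- so in substance you are reconstructing the source's argument. Two points need attention. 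First, your normalization of the Abel transform has the wrong sign: with the paper's conventions ($\phi_\lambda=(a(x)^{\rho-i\lambda})^\sharp$ and left Haar measure $a^{-Q-1}\,dX\,dZ\,da$, i.e.\ $e^{-2\rho t}\,dn\,dt$ in the coordinates $x=na_t$), the prefactor must be $e^{-\rho t}$ as in the paper's own definition of $\mathcal A$; with $e^{+\rho t}$ the function $\mathcal A f$ is \emph{not} even and the displayed identity $\widehat f(\lambda)=\int_\R\mathcal A f(t)e^{-i\lambda t}\,dt$ fails. (Relatedly, evenness of $\mathcal A f$ is not ``automatic'' from radiality of $f$; it is equivalent to $\phi_\lambda=\phi_{-\lambda}$ via the factorization and has to be established.) Second, and more seriously for completeness, the entire analytic content of the theorem --- the two-sided seminorm estimates showing that $\mathcal A$ and $\mathcal A^{-1}$ exchange the weight $e^{(2/p)\rho r(x)}$ with $e^{\gamma_p\rho|t|}$, and the support preservation by the Weyl-type fractional integrals occurring in $\mathcal A^{-1}$ --- is asserted rather than carried out, as you yourself acknowledge. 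As a strategy your proposal is correct and coincides with the source; as a proof it is incomplete exactly where \cite{ADY} and \cite{Flensted-Jensen} do the work.
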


For a suitable radial function $f$ on $S$, the Abel transform is defined by $$\mathcal A f(t)= e^{-\rho t}\int_N f(na_t)\,dn \text{ where } a_t=e^t.$$ It satisfies the relation $\what{f}(\lambda)=\wtilde{\mathcal A f}(\lambda)$ for a suitable radial function $f$ on $S$, where $\wtilde{\mathcal A f}$ is the Euclidean Fourier transform of $\mathcal A f$. Therefore it follows from Theorem \ref{sch-iso-na} that the Abel transform $f\mapsto \mathcal A f$ is a topological isomorphism between $C_c^\infty(S)^\sharp$ and $C_c^\infty(\R)_e$ and also between $\mathcal C^p(S)^\sharp$ and $\mathcal S(S_p)_e$.

\section{Bochner's theorem}
In this section first we give an integral representation of the {\em radially positive definite} functions. Then we characterize the image of radial positive measures $\theta$'s which satisfies $\int_G \phi_0(x)\,d\theta(x)<\infty$ under the spherical transform.

We have the following theorem (see \cite[Theorem 5, p. 226]{Gelfand}). 

\begin{theorem}
\label{evenly-multi-paley-space}
 Let $T$ be an evenly positive definite distribution on $\R$ i.e. $T(\phi\ast \phi^\ast)\geq 0$ for all $\phi\in C_c^\infty(\R)_e$. Then there exists even positive  measures $\mu_1$ and $\mu_2$ such that $$T(\phi)=\int_\R\wtilde{\phi}(\lambda)\,d\mu_1(\lambda) + \int_\R \wtilde{\phi}(i\lambda)\,d\mu_2(\lambda) \text{ for all } \phi\in C_c^\infty(\R)_e$$ where  $\mu_1$ is a tempered measure and $\mu_2$ is such that $\int_\R e^{a|\lambda|}\, d\mu_2(\lambda)<\infty$ for all $a>0$.
\end{theorem}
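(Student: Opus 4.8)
The plan is to regard $T$ as a positive functional on the commutative convolution $\ast$-algebra $(C_c^\infty(\R)_e,\ast)$ and to disintegrate it over the ``positive part'' of the spectrum of that algebra, which turns out to be $\R\cup i\R$; the two pieces of the spectrum then give $\mu_1$ and $\mu_2$, and their growth is forced by the continuity of the distribution $T$. The convenient coordinates are supplied by the Euclidean Fourier transform, which carries $C_c^\infty(\R)_e$ onto $PW(\C)_e$, turns $\ast$ into pointwise multiplication, and satisfies, for even $\phi$, both $\phi^\ast=\bar\phi$ and $\wtilde{\phi\ast\phi^\ast}(\lambda)=\wtilde\phi(\lambda)\,\overline{\wtilde\phi(\bar\lambda)}$. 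Since $\wtilde\phi$ is even, the last quantity equals $|\wtilde\phi(\lambda)|^2$ precisely when $\lambda\in\R$ or $\lambda\in i\R$, which already indicates where the representing measure must live.

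Next I would apply the Gelfand--Naimark (GNS) construction together with the spectral theorem. Endow $C_c^\infty(\R)_e$ with the pre-inner product $\langle\phi,\psi\rangle:=T(\phi\ast\psi^\ast)$ (Hermitian and positive semidefinite by hypothesis), pass to the Hilbert completion $\mathcal H$, let the algebra act on $\mathcal H$ by the commuting family of (in general unbounded, symmetric) convolution operators, and spectrally decompose that family. The nonzero continuous multiplicative functionals of $(C_c^\infty(\R)_e,\ast)$ are exactly the maps $\phi\mapsto\wtilde\phi(\lambda)$, $\lambda\in\C$, with $\lambda$ and $-\lambda$ giving the same functional; among them, the $\ast$-characters --- equivalently those sending every $\phi\ast\phi^\ast$ to a nonnegative number --- are precisely the ones with $\lambda\in\R$ or $\lambda\in i\R$. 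The spectral decomposition then produces a positive Radon measure $\nu$ on that set with $T(\phi)=\int\wtilde\phi(\lambda)\,d\nu(\lambda)$; writing the restriction of $\nu$ to $\R$ as an even measure $\mu_1$ and transporting its restriction to $i\R$ to an even measure $\mu_2$ on $\R$ via $\lambda\mapsto i\lambda$ gives $T(\phi)=\int_\R\wtilde\phi(\lambda)\,d\mu_1(\lambda)+\int_\R\wtilde\phi(i\lambda)\,d\mu_2(\lambda)$.

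It remains to establish the growth conditions --- which is also what guarantees absolute convergence of the two integrals. For $\mu_2$: if $\phi\in C_c^\infty(\R)_e$ is supported in $[-R,R]$ then $|\wtilde\phi(i\lambda)|\le\|\phi\|_1\,e^{R|\lambda|}$; conversely, feeding into the representation the function $\phi=\psi\ast\psi^\ast$, with $\psi\ge 0$ even, supported in $[-R/2,R/2]$ and not vanishing near the endpoints, gives $\int_\R|\wtilde\psi(i\lambda)|^2\,d\mu_2(\lambda)\le T(\psi\ast\psi^\ast)<\infty$, whereas $|\wtilde\psi(i\lambda)|^2\gtrsim e^{(R-\varepsilon)|\lambda|}$ as $|\lambda|\to\infty$; since $R$ may be taken arbitrarily large and $\varepsilon$ arbitrarily small, $\int_\R e^{a|\lambda|}\,d\mu_2(\lambda)<\infty$ for every $a>0$. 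For $\mu_1$: because $T$ is a distribution, $0\le T(\psi\ast\psi^\ast)\le C\sum_{j\le N}\|(\psi\ast\psi^\ast)^{(j)}\|_\infty\le C'\int_\R(1+|\lambda|)^N|\wtilde\psi(\lambda)|^2\,d\lambda$ for even $\psi$ with support in a fixed interval, while $\int_\R|\wtilde\psi(\lambda)|^2\,d\mu_1(\lambda)\le T(\psi\ast\psi^\ast)$ for all such $\psi$; comparing these for a sufficiently rich supply of $\psi$ shows that $\mu_1$ is polynomially bounded, hence tempered.

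The delicate step is the spectral one. The algebra $(C_c^\infty(\R)_e,\ast)$ is not normable, the convolution operators on $\mathcal H$ need not be bounded, and $T$ itself need not be tempered, so the existence, uniqueness and support of the disintegrating measure $\nu$ all require care; in particular one must rule out spectral mass lying off $\R\cup i\R$. The point is that for each $\phi$ the function $\lambda\mapsto\wtilde\phi(\lambda)\,\overline{\wtilde\phi(\bar\lambda)}$ is real and nonnegative exactly on $\R\cup i\R$, and such functions separate the $\ast$-characters; together with the positivity of $T$ this pins the representing measure down to $\R\cup i\R$ and makes it unique, after which the formula and the above estimates follow. This is precisely the classical theorem of M. G. Krein recalled in \cite[Theorem 5, p.~226]{Gelfand}, which one may also invoke directly.
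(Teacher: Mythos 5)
The paper does not prove this statement at all: it is quoted verbatim as Krein's theorem for evenly positive definite distributions, with proof by citation to \cite[Theorem 5, p.~226]{Gelfand}. So your closing sentence --- that one may invoke that reference directly --- is exactly what the paper does, and at that level the proposal is consistent with the paper. The growth arguments you give are also essentially sound \emph{once a representing measure on $\R\cup i\R$ is in hand}: the lower bound $|\wtilde\psi(i\lambda)|^2\gtrsim e^{(R-\varepsilon)|\lambda|}$ for nonnegative even $\psi$ supported in $[-R/2,R/2]$, combined with $\int|\wtilde\psi(i\lambda)|^2\,d\mu_2(\lambda)\le T(\psi\ast\psi^\ast)$ (legitimate because both terms of the representation are nonnegative on $\psi\ast\psi^\ast$), does force $\int_\R e^{a|\lambda|}\,d\mu_2(\lambda)<\infty$ for all $a>0$; and the order estimate for $T$ on a fixed compact set, tested against $\psi_n(t)=\cos(nt)\chi(t)$, does give temperedness of $\mu_1$.

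The genuine gap is the spectral step, which you flag as ``delicate'' but do not actually close. Positivity of $T$ on $\{\phi\ast\phi^\ast\}$ gives the GNS pre-Hilbert space, but for a commutative $\ast$-algebra represented by \emph{unbounded} symmetric operators there is in general no joint spectral resolution, and a positive functional need not be an integral over the $\ast$-characters --- this is precisely the phenomenon behind the failure of the multidimensional moment problem, so the conclusion cannot be ``pinned down'' by the soft argument you offer (that functions of the form $\wtilde\phi(\lambda)\overline{\wtilde\phi(\bar\lambda)}$ are nonnegative exactly on $\R\cup i\R$ and separate characters). The mechanism that actually confines the measure to $\R\cup i\R$ is that the algebra is generated by a single element whose GNS operator is $-d^2/dx^2$, a symmetric operator commuting with a conjugation; a self-adjoint extension has real spectrum $s=\lambda^2$, whence $\lambda\in\R\cup i\R$, and one must still address essential self-adjointness (or independence of the extension) to get a well-defined representing measure. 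None of this is carried out in your sketch, so as a standalone proof it is incomplete; as a gloss on why the cited theorem of Krein is plausible, it is fine, and citing \cite{Gelfand} --- as the paper does --- is the appropriate resolution.
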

Here by a tempered measure  $\mu$ on $\R$ we mean that the measure $\mu$ satisfies $\int_\R\frac{1}{(1+|\lambda|^2)^p}\,d\mu(\lambda)<\infty$ for some $p>0$.

We call a continuous, radial function $f$ on $S$ {\em radially positive definite} if $$\int_S f(x)(g\ast g^\ast)(x)\,dx\geq 0, \text{ for all } g\in C_c^\infty(S)^\sharp\text{ where } g^\ast(x)=\overline{g(x^{-1})}.$$ If the equation above is true for every $g\in C_c^\infty(S)$,  we say that $f$ is a positive definite function. Then it is clear that the set of positive definite radial functions is a subset of the set of radially positive definite functions. 

\begin{theorem}
\label{boch-Fou-side}
 Let $f$ be a radially positive definite function on $S$. Then there exists even positive measures $\mu_1$ and $\mu_2$ such that $$f(x)=\int_\R\phi_\lambda(x)\, d\mu_1(\lambda) + \int_\R \phi_{i\lambda}(x)d\mu_2(\lambda) \text{ for all } x\in S$$ where $\mu_1$ is a finite measure and $\mu_2$ is such that $\int_\R e^{a|\lambda|}\, d\mu_2(\lambda)<\infty$ for all $a>0$.
\end{theorem}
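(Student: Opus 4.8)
The plan is to push the problem to the real line by the Abel transform, apply Theorem \ref{evenly-multi-paley-space}, and then transport the resulting representation back to $S$; the only point that goes beyond routine bookkeeping will be upgrading ``$\mu_1$ tempered'' to ``$\mu_1$ finite''.

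First I would use that $\mathcal A\colon C_c^\infty(S)^\sharp\to C_c^\infty(\R)_e$ is a topological isomorphism which, by the relation $\what g(\lambda)=\wtilde{\mathcal A g}(\lambda)$, is also an isomorphism of convolution algebras and satisfies $\mathcal A(g^\ast)=(\mathcal A g)^\ast$ (with $\psi^\ast(t)=\overline{\psi(-t)}$ on $\R$; this is immediate on the Fourier side). Define a distribution $T_f$ on $C_c^\infty(\R)_e$ by $T_f(\phi)=\int_S f(x)\,(\mathcal A^{-1}\phi)(x)\,dx$, which is meaningful since $f$ is continuous, $\mathcal A^{-1}\phi$ is compactly supported, and $\phi\mapsto\mathcal A^{-1}\phi$ is continuous. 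For $\phi\in C_c^\infty(\R)_e$ and $g=\mathcal A^{-1}\phi\in C_c^\infty(S)^\sharp$ one gets $T_f(\phi\ast\phi^\ast)=\int_S f(x)(g\ast g^\ast)(x)\,dx\ge 0$, so $T_f$ is evenly positive definite, and Theorem \ref{evenly-multi-paley-space} produces even positive measures $\mu_1$ (tempered) and $\mu_2$ (with $\int_\R e^{a|\lambda|}\,d\mu_2(\lambda)<\infty$ for every $a>0$). Taking $\phi=\mathcal A g$, so that $\wtilde\phi=\what g$, this reads
\begin{equation}\label{eq-star}
\int_S f(x)\,g(x)\,dx=\int_\R\what g(\lambda)\,d\mu_1(\lambda)+\int_\R\what g(i\lambda)\,d\mu_2(\lambda),\qquad g\in C_c^\infty(S)^\sharp .
\end{equation}

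The crucial step is to show $\mu_1(\R)<\infty$. Fix a real, radial approximate identity $(h_\varepsilon)$ in $C_c^\infty(S)^\sharp$ with $\int_S h_\varepsilon=1$ and $\mathrm{supp}\,h_\varepsilon\subset\{r(x)\le\varepsilon\}$, and put $g_\varepsilon=h_\varepsilon\ast h_\varepsilon^\ast\in C_c^\infty(S)^\sharp$; then $\int_S g_\varepsilon=1$, $g_\varepsilon\to\delta_e$, and, since $\phi_{i\lambda}$ is real for $\lambda\in\R$, one has $\what{g_\varepsilon}(\lambda)=|\what{h_\varepsilon}(\lambda)|^2\ge 0$ and $\what{g_\varepsilon}(i\lambda)=\what{h_\varepsilon}(i\lambda)^2\ge 0$ for all $\lambda\in\R$. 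Inserting $g=g_\varepsilon$ in \eqref{eq-star}, the left side tends to $f(e)$ by continuity of $f$, while $\what{g_\varepsilon}(\lambda)\to\phi_\lambda(e)=1$ and $\what{g_\varepsilon}(i\lambda)\to 1$ pointwise; Fatou's lemma applied to the two nonnegative integrals on the right then gives $\mu_1(\R)+\mu_2(\R)\le f(e)<\infty$, so in particular $\mu_1$ is finite.

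Finally, with $\mu_1$ finite, set $F(x)=\int_\R\phi_\lambda(x)\,d\mu_1(\lambda)+\int_\R\phi_{i\lambda}(x)\,d\mu_2(\lambda)$. The first integral is absolutely convergent and bounded by $\mu_1(\R)$ because $|\phi_\lambda(x)|\le\phi_{i\rho}(x)=1$ for real $\lambda$; the second converges by the standard estimates for $\phi_{i\lambda}$ (giving $|\phi_{i\lambda}(x)|\le C_x e^{|\lambda|r(x)}$ up to polynomial factors, see \cite{ADY}) together with $\int_\R e^{a|\lambda|}\,d\mu_2(\lambda)<\infty$, and dominated convergence makes $F$ continuous. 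Since these bounds are locally uniform in $x$ and each $g\in C_c^\infty(S)^\sharp$ is compactly supported, Fubini's theorem applied to the two terms on the right of \eqref{eq-star} (using $\what g(\lambda)=\int_S g\,\phi_\lambda$ and $\what g(i\lambda)=\int_S g\,\phi_{i\lambda}$) yields $\int_S f(x)g(x)\,dx=\int_S F(x)g(x)\,dx$ for all $g\in C_c^\infty(S)^\sharp$. As $f-F$ is continuous and radial, testing against radial bump functions supported in thin shells $\{|r(x)-r_0|<\varepsilon\}$ forces $f(x)=F(x)$ for every $x\in S$, which is the asserted representation. The only non-formal part of this argument is the finiteness of $\mu_1$: it is exactly there that one exploits the positivity of $\what{g_\varepsilon}$ on $\R\cup i\R$ (forced by choosing $g_\varepsilon=h_\varepsilon\ast h_\varepsilon^\ast$) together with Fatou's lemma and the continuity of $f$ at $e$; everything else is the standard algebra of the Abel transform, elementary bounds for $\phi_\lambda$, and a density argument.
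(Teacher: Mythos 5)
Your proposal is correct and follows essentially the same route as the paper: transfer $f$ to an evenly positive definite distribution on $\R$ via the Abel transform, invoke Theorem \ref{evenly-multi-paley-space}, prove finiteness of $\mu_1$ by testing against a $\delta$-sequence of the form $h\ast h^\ast$ (so that the spherical transform is nonnegative on $\R\cup i\R$) and applying Fatou's lemma, and then identify $f$ with the integral representation by Fubini and a density argument. The only differences are cosmetic (an $\varepsilon$-indexed approximate identity instead of a sequence, and applying Fatou to both terms rather than discarding the $\mu_2$ term), plus somewhat more explicit justification of the final Fubini and identification steps than the paper provides.
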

\begin{proof}
We define a distribution $T_f: C_c^\infty(\R)_e\rightarrow \C$ by $$T_f(h)=\int_S f(x)\mathcal A^{-1}h(x)\,dx$$for all $h\in C_c^\infty(\R)_e$. By the continuity of $f$ and isomorphism of Abel transform it is easy to check that the integral exists and $T_f$ is continuous.  Also we can easily check that $\mathcal A^{-1}(h\ast h^\ast)=\mathcal A^{-1}h \ast (\mathcal A^{-1}h)^\ast$. Then since $f$ is radially positive definite function, for all $h\in C_c^\infty(\R)_e$  $$T_f(h\ast h^\ast)=\int_S f(x)\left(\mathcal A^{-1}h\ast (\mathcal A^{-1}h)^\ast\right)(x)\,dx\geq 0.$$ That is $T_f$ is an evenly positive definite distribution on $\R$. Hence by Theorem \ref{evenly-multi-paley-space} there exists even positive measures $\mu_1$ and $\mu_2$ such that 
$$T_f(h)=\int_\R \wtilde{h}(\lambda)\, d\mu_1(\lambda) + \int_\R \wtilde{h}(i\lambda)\, d\mu_2(\lambda)\text{ for all } h\in C_c^\infty(\R)_e$$ 
where $\mu_1$ is a tempered measure and $\mu_2$ is such that $\int_\R e^{a|\lambda|}d\mu_2(\lambda)<\infty$ for all $a>0$. Therefore we have $$\int_S f(x)g(x)\,dx=\int_\R \what{g}(\lambda)\, d\mu_1(\lambda) + \int_\R \what{g}(i\lambda)\, d\mu_2(\lambda)\text{ for all } g\in C_c^\infty(S)^\sharp.$$ Now we shall show that the measure $\mu_1$ is finite. For this let $\{\alpha_n\}$ be a $\delta$-sequence in $C_c^\infty(S)^\sharp$ and let $g_n=\alpha_n\ast \alpha_n^\ast$. Then $\{g_n\}$ is a $\delta$-sequence in $C_c^\infty(S)^\sharp$. Also, $\what{g_n}(\lambda)=|\what{\alpha_n}(\lambda)|^2\geq 0$ for all $\lambda\in\C$ and $\lim_{n\rightarrow\infty}\what{g_n}(\lambda)=\lim_{n\rightarrow\infty}\int_S g_n(x)\phi_\lambda(x)\,dx=\phi_\lambda(e)=1.$ Now the equation  $$\int_S f(x)g_n(x)\,dx=\int_\R \what{g_n}(\lambda)\, d\mu_1(\lambda) + \int_\R \what{g_n}(i\lambda)\, d\mu_2(\lambda)$$ implies that $\int_S f(x)g_n(x)\,dx\geq \int_\R \what{g_n}(\lambda)\, d\mu_1(\lambda)$ (since $\what{g_n}(\lambda)\geq 0$ for all $\lambda\in\C$ and measure $\mu_1, \mu_2$ are positive). Then using Fatou's lemma we get that $$f(e)=\lim_{n\rightarrow\infty}\int_S f(x)g_n(x)\,dx\geq \lim_{n\rightarrow\infty}\int_\R\what{g_n}(\lambda)\,d\mu_1(\lambda)\geq \int_\R\lim_{n\rightarrow\infty}\what{g_n}(\lambda)\,d\mu_1(\lambda).$$This shows that $$\int_\R\,d\mu_1(\lambda)\leq f(e).$$Therefore $\mu_1$ is finite measure. Then using Fubini's theorem we get that
$$\int_S f(x)g(x)\, dx = \int_S g(x)\int_\R \phi_\lambda(x)d\mu_1(\lambda) \,dx + \int_S g(x)\int_\R\phi_{i\lambda}(x)\, d\mu_2(\lambda)\,dx.$$
The equation above is true for every function $g\in C_c^\infty(S)^\sharp$. Hence $$f(x)=\int_\R\phi_\lambda(x)\, d\mu_1(\lambda) + \int_\R \phi_{i\lambda}(x)d\mu_2(\lambda) \text{ for all } x\in S$$ where $\mu_1$ is finite positive even measure and $\mu_2$ is positive even measure such that $\int_\R e^{a|\lambda|}\, d\mu_2(\lambda)<\infty$ for all $a>0$.
\end{proof}

Now we define an operation $\odot$ on the suitable functions on $\R$ which will make $L^1(\R, |c(\lambda)|^{-2}d\lambda)$ an algebra (cf. \cite{Flensted-Jensen}).

If $\lambda\in\R$ then $|\phi_\lambda(x)|\le \phi_0(x)\le (1+r(x))e^{-\rho r(x)}$ for all $x\in S$ . Also for Laplace-Beltrami operator $\mathcal L$ we have $$|\mathcal L \phi_\lambda (x)|\leq C (1+|\lambda|)^t\phi_0(x) \text{ for some constants }C>0, t\geq 0, \text{ for all }x\in S.$$This shows that the function $$f_{\lambda,\mu}:x\mapsto \phi_\lambda(x)\phi_\mu(x)\in \mathcal \mathcal \mathcal \mathcal \mathcal C^2(S)^\sharp \text{ for }\lambda, \mu\in\R.$$ Therefore the spherical  transform $$\what{f_{\lambda, \mu}}(\nu)=\int_S f_{\lambda,\mu}(x)\phi_\nu(x)\,dx=\int_S \phi_\lambda(x)\phi_\mu(x)\phi_\nu(x)\,dx $$exists on $\R$ and it belongs to $\mathcal S(\R)_e$. Let us denote $\what{f_{\lambda, \mu}}(\nu)$ by $K(\lambda,\mu,\nu)$ which exists for $\lambda,\mu,\nu\in\R$. Also it is easy to prove that for any even polynomial $q$, there exists constants $C, r_1, r_2 \geq 0$ such that $|q(\lambda)K(\lambda, \mu, \nu)|\leq C (1 +|\mu|)^{r_1}(1+|\nu|)^{r_2}$ for all $\lambda, \mu, \nu\in\R$.

Using the inversion theorem we get that 
$$\phi_\lambda(x)\phi_\mu(x)= c_0\int_\R K(\lambda,\mu,\nu)\phi_\nu(x)|c(\nu)|^{-2}\,d\nu \text{ where } c_0=2^{k-2}\pi^{-\frac n2-1}\Gamma(\frac n2).$$
Suppose $f, g\in \mathcal \mathcal \mathcal \mathcal \mathcal C^2(S)^\sharp$. Then by the inversion formula we have
 $$f(x)g(x)= c_0^3\int_\R\left(\int_\R\int_\R \what{f}(\lambda)\what{g}(\mu)K(\lambda,\mu,\nu)|c(\lambda)|^{-2}|c(\mu)|^{-2}\,d\lambda \,d\mu\right)\phi_\nu(x)|c(\nu)|^{-2}\,d\nu.$$
For $A, B\in \mathcal S(\R)$ we define $$A\odot B(\nu)=c_0^2\int_\R\int_\R A(\lambda)B(\mu)K(\lambda,\mu,\nu)|c(\lambda)|^{-2}|c(\mu)|^{-2}\,d\lambda\,d\mu.$$ Then for $A, B\in \mathcal S(\R)$, $A\odot B$ exists on $\R$ as the Plancherel measure $|c(\lambda)|^{-2}$ has polynomial growth (see \cite[(2.31)]{ADY}). Therefore it follows that $$f(x)g(x)=c_0\int_\R \left(\what{f}\odot \what{g}\right)(\nu)\phi_\nu(x)|c(\nu)|^{-2}d\nu.$$ This shows that 
\begin{equation}\label{newproduct}
\what{f.g}(\lambda)=\what{f}\odot\what{g}(\lambda).
\end{equation}
 Hence $A, B\in \mathcal S(\R)$ implies that $A\odot B\in\mathcal S(\R)$.  Also for suitable radial function $h$ on $S$, we have 
\begin{equation}\label{relation}
\int_S h(x)(f.g)(x)\,dx=\int_\R \what{h}(\lambda)(\what{f}\odot \what{g})(\lambda)|c(\lambda)|^{-2}\,d\lambda.
\end{equation}

From \cite[Theorem 4.4]{Flensted-Jensen} and \cite[(2.13), (2.14)]{ADY} it follows that $K(\lambda, \mu, \nu)=\what{f_{\lambda,\mu}}(\nu)\geq 0$ for all $\lambda, \mu,\nu\in\R$. From this fact it is easy to check that $L^1(\R, |c(\lambda)|^{-2}d\lambda)$ is an algebra under the operation $\odot$ and $\|f\odot g\|_{L^1(\R,|c(\lambda)|^{-2}d\lambda)}\leq \|f\|_{L^1(\R,|c(\lambda)|^{-2}d\lambda)} \|g\|_{L^1(\R,|c(\lambda)|^{-2}d\lambda)}$.

\begin{definition}
 For a  positive radial measure $\theta$ on $S$, its spherical transform is defined by $\what{\theta}(\lambda)=\int_S\phi_\lambda(x)\,d\theta(x)$, whenever the integral exists.
\end{definition}
\begin{proposition}\label{measure-group} Let $\theta$ be a radial positive measure on $S$ such that $\int_S\phi_0(x)\, d\theta(x)<\infty$. Then the following conditions are satisfied:
\begin{enumerate}
 \item[(1)]$\what{\theta}$ exists on $\R$ and it is an even, continuous, bounded function on $\R$; 
\item[(2)]$\int_\R \what{\theta}(\lambda)\left(g\odot g^\ast\right)(\lambda)|c(\lambda)|^{-2}\,d\lambda \geq 0 \text{ for all } g\in \mathcal S(\R)_e.$
\end{enumerate}
\end{proposition}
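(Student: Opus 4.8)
The plan is to prove the two assertions separately. Claim~(1) will be an immediate consequence of the pointwise bound $|\phi_\lambda(x)|\le\phi_0(x)$, valid for $\lambda\in\R$, while claim~(2) will follow by transferring $g$ to the group $S$ through the Schwartz isomorphism, applying the inversion formula for products, and exchanging an integration with $\theta$.

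For (1): since $|\phi_\lambda(x)|\le\phi_0(x)$ for every $\lambda\in\R$, the integral $\what\theta(\lambda)=\int_S\phi_\lambda(x)\,d\theta(x)$ converges absolutely and $|\what\theta(\lambda)|\le\int_S\phi_0(x)\,d\theta(x)<\infty$, so $\what\theta$ is defined and bounded on $\R$; it is even because $\phi_\lambda=\phi_{-\lambda}$. For continuity one observes that $\lambda\mapsto\phi_\lambda(x)$ is continuous (indeed holomorphic) for each fixed $x$, while the integrands are dominated uniformly in $\lambda\in\R$ by the $\theta$-integrable function $\phi_0$, so the dominated convergence theorem gives continuity of $\what\theta$.

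For (2): fix $g\in\mathcal S(\R)_e=\mathcal S(S_2)_e$ and, by Theorem~\ref{sch-iso-na}, pick $G\in\mathcal C^2(S)^\sharp$ with $\what G=g$. As $G$ is radial, $G(x^{-1})=G(x)$, so $G^\ast=\overline G$ and correspondingly $g^\ast=\overline g$ on $\R$; hence $G\cdot G^\ast=|G|^2\in\mathcal C^2(S)^\sharp$ and, by \eqref{newproduct}, $\what{|G|^2}=\what G\odot\what{\overline G}=g\odot g^\ast$. Applying the inversion formula for products, $F_1(x)F_2(x)=c_0\int_\R(\what{F_1}\odot\what{F_2})(\lambda)\,\phi_\lambda(x)\,|c(\lambda)|^{-2}\,d\lambda$, with $F_1=G$ and $F_2=\overline G$, and then integrating against $d\theta(x)$, I would interchange the two integrations to get
$$\int_S|G(x)|^2\,d\theta(x)=c_0\int_\R(g\odot g^\ast)(\lambda)\,\what\theta(\lambda)\,|c(\lambda)|^{-2}\,d\lambda.$$
Because $|G(x)|^2\le C\,\phi_0(x)$ for $G\in\mathcal C^2(S)^\sharp$ and $\int_S\phi_0\,d\theta<\infty$, the left-hand side is a finite nonnegative number, which is precisely claim~(2). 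This identity is the measure-valued counterpart of \eqref{relation}.

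The main technical obstacle is justifying the exchange of integrations. Here one uses that $g\odot g^\ast$ is rapidly decreasing (it lies in $\mathcal S(\R)_e$) whereas the Plancherel density $|c(\lambda)|^{-2}$ grows at most polynomially, so that $\int_\R|(g\odot g^\ast)(\lambda)|\,|c(\lambda)|^{-2}\,d\lambda<\infty$; together with $|\phi_\lambda(x)|\le\phi_0(x)$ for $\lambda\in\R$ and $\int_S\phi_0\,d\theta<\infty$, this bounds the pertinent double integral by $\bigl(\int_\R|(g\odot g^\ast)(\lambda)|\,|c(\lambda)|^{-2}\,d\lambda\bigr)\bigl(\int_S\phi_0(x)\,d\theta(x)\bigr)<\infty$, so Tonelli's theorem legitimizes the interchange. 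A minor auxiliary point is the equality $G(x^{-1})=G(x)$ for radial $G$, which follows from injectivity of the Abel transform once one notes $\mathcal A(\check G)(t)=\mathcal A G(-t)$ and that $\mathcal A G$ is even; alternatively one may simply take $g^\ast=\overline g$ as the involution of the $\ast$-algebra $\bigl(L^1(\R,|c(\lambda)|^{-2}d\lambda)_e,\odot\bigr)$, whereupon $g\odot g^\ast=\what{|G|^2}$ without further ado.
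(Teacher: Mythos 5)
Your proposal is correct and follows essentially the same route as the paper: part (1) via the domination $|\phi_\lambda|\le\phi_0$ and dominated convergence, and part (2) by lifting $g$ to $\alpha\in\mathcal C^2(S)^\sharp$ via Theorem \ref{sch-iso-na}, identifying $g\odot g^\ast$ with $\what{\alpha\overline{\alpha}}$ through \eqref{newproduct}, and using inversion plus Fubini to rewrite the integral as $\frac{1}{c_0}\int_S|\alpha(x)|^2\,d\theta(x)\ge 0$. Your explicit Tonelli justification and the bound $|\alpha|^2\le C\phi_0$ merely spell out details the paper leaves implicit.
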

\begin{proof}
$(1)$ Existence and boundedness of $\what{\theta}$ follows from the fact that for $\lambda\in\R$, $|\phi_\lambda(x)|\leq \phi_0(x)$ for all $x\in S$. Also $\phi_\lambda(x)=\phi_{-\lambda}(x)$ for all $x\in S, \lambda\in\R$ implies that $\what{\theta}$ is an even function. Continuity follows from the  dominated convergence theorem.

$(2)$
 Let $\alpha\in \mathcal \mathcal \mathcal \mathcal \mathcal C^2(S)^\sharp$ be such that $\what{\alpha}(\lambda)=g(\lambda)$. Then 
$$\int_\R \what{\theta}(\lambda)\left(g\odot g^\ast\right)(\lambda)|c(\lambda)|^{-2}\,d\lambda= \int_\R \what{\theta}(\lambda)\left(\what{\alpha}\odot (\what{\alpha})^\ast\right)(\lambda)|c(\lambda)|^{-2}\,d\lambda.$$
This is equal to $\int_\R \what{\theta}(\lambda)\what{\left(\alpha\overline{\alpha}\right)}(\lambda)|c(\lambda)|^{-2}\,d\lambda$. Then using the definition and the Fubini's theorem we get that $$\int_\R \what{\theta}(\lambda)\left(g\odot g^\ast\right)(\lambda)|c(\lambda)|^{-2}\,d\lambda=\int_S \int_\R \what{\left(\alpha\overline{\alpha}\right)}(\lambda)\phi_\lambda(x)|c(\lambda)|^{-2}\,d\lambda \,d\theta(x).$$
Then using the inversion formula we get that
$$
\int_\R \what{\theta}(\lambda)\left(g\odot g^\ast\right)(\lambda)|c(\lambda)|^{-2}\,d\lambda = \frac{1}{c_0}\int_S \left(\alpha.\overline{\alpha}\right)(x)\,d\theta(x)= \frac{1}{c_0}\int_S |\alpha(x)|^2 \,d\theta(x)\geq 0.
$$
\end{proof}

In the following theorem we characterize the image of such radial positive measures under the spherical transform. 
\begin{theorem}
\label{boch-group}
An even, continuous, bounded function $h$ on $\R$ is the spherical transform of a radial  positive measure $\theta$ on $S$ which satisfies $\int_S \phi_0(x)\,d\theta(x)<\infty$ if and only if $h$ satisfies the  condition  $\int_\R h(\lambda)(g\odot g^\ast)(\lambda)|c(\lambda)|^{-2}\,d\lambda\geq 0$ for all $g\in \mathcal S(\R)_e$.
\end{theorem}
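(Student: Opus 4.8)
The ``only if'' direction is precisely Proposition \ref{measure-group}, and it is consistent with the conclusion here since $\int_S\phi_0\,d\theta=\what\theta(0)=h(0)<\infty$ automatically once $\what\theta=h$ is known. For the converse I would recover $\theta$ by a Riesz representation argument on $S$. Given $h$ satisfying the stated condition, define $T\colon C_c^\infty(S)^\sharp\to\C$ by
\[
T(f)=c_0\int_\R h(\lambda)\,\what{f}(\lambda)\,|c(\lambda)|^{-2}\,d\lambda ,
\]
which converges and is continuous on $C_c^\infty(S)^\sharp$ because $\what f\in PW(\C)_e$ decays rapidly while $h$ is bounded and $|c(\lambda)|^{-2}$ grows polynomially.

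The crucial point is that $T$ is \emph{positive}. For $\alpha\in C_c^\infty(S)^\sharp$ one has $\what{|\alpha|^2}=\what\alpha\odot(\what\alpha)^\ast$ by \eqref{newproduct}, hence $T(|\alpha|^2)=c_0\int_\R h(\lambda)(\what\alpha\odot(\what\alpha)^\ast)(\lambda)|c(\lambda)|^{-2}\,d\lambda\ge0$ by hypothesis (with $g=\what\alpha\in\mathcal S(\R)_e$). For a general $0\le f\in C_c^\infty(S)^\sharp$, pick $\chi\in C_c^\infty(S)^\sharp$ with $0\le\chi\le1$ and $\chi\equiv1$ on $\mathrm{supp}\,f$; then $\chi\sqrt{f+\varepsilon}\in C_c^\infty(S)^\sharp$ for $\varepsilon>0$ and $\bigl(\chi\sqrt{f+\varepsilon}\bigr)^2=f+\varepsilon\chi^2$, so $0\le T\bigl((\chi\sqrt{f+\varepsilon})^2\bigr)=T(f)+\varepsilon T(\chi^2)$, and letting $\varepsilon\downarrow0$ gives $T(f)\ge0$. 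Using positivity, $T$ extends continuously to $C_c(S)^\sharp$ (a general $f$ is dominated by $\|f\|_\infty\psi$ with $\psi\in C_c^\infty(S)^\sharp$, $\psi\ge1$ on $\mathrm{supp}\,f$), and then $f\mapsto T(f^\sharp)$ is a positive linear functional on $C_c(S)$; the Riesz representation theorem produces a positive Radon measure $\theta$, radial because $T$ factors through $f\mapsto f^\sharp$, with $\int_S f\,d\theta=T(f)$ for every $f\in C_c^\infty(S)^\sharp$.

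The substantial point --- and the place where only boundedness of $h$ is available --- is to prove $\int_S\phi_0\,d\theta<\infty$; I expect this to be the main obstacle. I would bring in the heat kernel $h_t$ of $\mathcal L$: it is radial, positive, lies in $\mathcal C^p(S)^\sharp$ for every $p$, and $\what{h_t}(\lambda)=e^{-t(\lambda^2+\rho^2)}$. Writing $f_n=\chi_n h_t$ with cutoffs $\chi_n\uparrow1$ chosen so that $\mathcal L^k\chi_n$ is bounded uniformly in $n$, one gets $\what{f_n}\to\what{h_t}$ pointwise and, from $(\lambda^2+\rho^2)^k\what{f_n}(\lambda)=(-1)^k\what{\mathcal L^k f_n}(\lambda)$ together with $|\what{\mathcal L^kf_n}(\lambda)|\le\int_S|\mathcal L^kf_n|\,\phi_0\,dx$, a bound $|\what{f_n}(\lambda)|\le C_N(1+|\lambda|)^{-N}$ uniform in $n$. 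Monotone convergence on the left ($f_n\uparrow h_t\ge0$) and dominated convergence on the right then give
\[
\int_S h_t\,d\theta=c_0\int_\R h(\lambda)\,e^{-t(\lambda^2+\rho^2)}\,|c(\lambda)|^{-2}\,d\lambda<\infty .
\]
Now let $t\to\infty$ along $g_t:=t^{n/2}e^{t\rho^2}h_t$. Since $|c(\lambda)|^{-2}$ behaves like $\kappa|\lambda|^{n-1}$ near $0$, the measures $\what{g_t}(\lambda)|c(\lambda)|^{-2}\,d\lambda=t^{n/2}e^{-t\lambda^2}|c(\lambda)|^{-2}\,d\lambda$ converge weak-$\ast$ to $\kappa'\delta_0$ for some constant $\kappa'>0$; by the inversion formula this forces $g_t\to c_0\kappa'\phi_0$ pointwise, while $\int_S g_t\,d\theta=c_0\int_\R h(\lambda)\what{g_t}(\lambda)|c(\lambda)|^{-2}\,d\lambda\to c_0\kappa'h(0)$. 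As $g_t\ge0$, Fatou's lemma yields $c_0\kappa'\int_S\phi_0\,d\theta\le c_0\kappa'h(0)$, that is $\int_S\phi_0\,d\theta\le h(0)<\infty$.

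With finiteness in hand, the identity $\int_S f\,d\theta=c_0\int_\R h\,\what f\,|c|^{-2}\,d\lambda$ extends from $C_c^\infty(S)^\sharp$ to $\mathcal C^2(S)^\sharp$, since every $f\in\mathcal C^2(S)^\sharp$ is dominated by a multiple of $\phi_0$ and is a limit, in the Schwartz topology, of elements of $C_c^\infty(S)^\sharp$. To identify $\what\theta$, fix $\mu\in\R$ and choose $\psi_n\in PW(\C)_e$ with $\psi_n\ge0$ and $\psi_n(\lambda)|c(\lambda)|^{-2}\,d\lambda\to\tfrac12(\delta_\mu+\delta_{-\mu})$ weak-$\ast$; letting $f_n\in C_c^\infty(S)^\sharp$ have $\what{f_n}=\psi_n$, the inversion formula gives $f_n\to c_0\phi_\mu$ pointwise with $|f_n|\le C\phi_0$ uniformly in $n$, so dominated convergence (now legitimate) gives $c_0\int_S\phi_\mu\,d\theta=\lim_n\int_S f_n\,d\theta=\lim_n c_0\int_\R h\,\psi_n\,|c|^{-2}\,d\lambda=c_0h(\mu)$. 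Hence $\what\theta=h$, and $\theta$ is the required measure. The two delicate ingredients are the uniform-in-$n$ decay of $\what{f_n}$ in the heat-kernel step and the precise weak-$\ast$ behaviour of the Plancherel density near $\lambda=0$; everything else is routine bookkeeping.
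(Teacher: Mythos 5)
Your overall strategy coincides with the paper's: define the positive functional via $h$, establish positivity of $T$ on nonnegative functions by the square-root-plus-$\varepsilon$ trick (the paper uses $\gamma\sqrt{\psi+\tfrac1m}$, you use $\chi\sqrt{f+\varepsilon}$ --- same device), invoke Riesz representation, prove $\int_S\phi_0\,d\theta<\infty$ by concentrating the heat kernel at $\lambda=0$ and applying Fatou, and finally identify $\what\theta=h$. The only structural difference is cosmetic: the paper routes the functional through $\mathcal S(\R)_e$ and $\mathcal C^2(S)^\sharp$ before restricting to $C_c^\infty(S)^\sharp$, and identifies $\what\theta=h$ by Fubini and surjectivity of the spherical transform rather than by approximate identities at $\pm\mu$; both work.

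There is, however, one concrete error in the step you yourself single out as the substantial one. You assert that $|c(\lambda)|^{-2}$ behaves like $\kappa|\lambda|^{n-1}$ near $0$ and accordingly normalize the heat kernel as $g_t=t^{n/2}e^{t\rho^2}h_t$. In fact $|\lambda|^{n-1}$ is the growth of the Plancherel density at \emph{infinity}; near the origin the $c$-function has a simple pole (coming from the $\Gamma(2i\lambda)$ factor), so $|c(\lambda)|^{-2}\sim C\lambda^2$ as $\lambda\to0$. Consequently the total mass of $t^{n/2}e^{-t\lambda^2}|c(\lambda)|^{-2}\,d\lambda$ is of order $t^{(n-3)/2}$, and your measures converge weak-$\ast$ to $0$ or diverge (except when $n=3$), so the claimed limit $\kappa'\delta_0$ and the conclusion $\int_S g_t\,d\theta\to c_0\kappa'h(0)$ fail as written. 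The repair is exactly the paper's normalization: divide by $p_t(e)=\int_\R e^{-t(\lambda^2+\rho^2)}|c(\lambda)|^{-2}\,d\lambda$ instead of guessing the power of $t$, so that the spectral measures $p_t(e)^{-1}e^{-t(\lambda^2+\rho^2)}|c(\lambda)|^{-2}\,d\lambda$ are probability measures concentrating at $0$ by an elementary two-sided estimate; then $p_t(x)/p_t(e)\to\phi_0(x)$ and Fatou gives $\int_S\phi_0\,d\theta\le h(0)$ exactly as you intend. With that correction your argument is complete and matches the paper's.
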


The classical Bochner's theorem can be restated as follows: A continuous function $p$ on $\R$ is the Fourier transform of a finite positive measure on $\R$ if and only if it is a positive linear functional on the algebra $\left(L^1(\R,d\lambda), \ast\right)$.

Let $p$ be an even, continuous, bounded function on $\R$ which is a positive linear functional on the algebra $\left(L^1(\R,d\lambda)_e, \ast\right)$ i.e. $p$ satisfies $\int_\R p(\lambda) (l\ast l^\ast)(\lambda)\geq 0$ for all $l\in L^1(\R,d\lambda)_e$. Then by Krein's theorem (\cite{Gelfand}) there exists finite, positive, even measures $\nu_1, \nu_2$ on $\R$ such that $$p(\lambda)=\int_\R e^{i\lambda x}\,d\nu_1(x) + \int_{i\R} e^{i\lambda y}\,d\nu_2(y).$$ But the boundedness of $p$ implies that $p(\lambda)=\int_\R e^{i\lambda x}\,d\nu_1(x)$. Therefore $p$ is a positive definite function. Conversely any even positive definite function can be considered as a positive linear functional on the algebra $\left(L^1(\R,d\lambda)_e, \ast\right)$. 

Hence the Bochner's theorem for even functions on $\R$ can be stated as follows: An even, continuous, bounded function $p$ on $\R$ is the Fourier transform of a finite, positive, even measure on $\R$ if and only if it is a positive linear functional on the algebra $\left(L^1(\R,d\lambda)_e, \ast\right)$.

Also we can restate our theorem (Theorem \ref{boch-group})  alternatively as follows: An even, continuous, bounded function $h$ on $\R$ is the spherical transform of a radial  positive measure $\theta$ on $S$ which satisfies $\int_S \phi_0(x)\,d\theta(x)<\infty$ if and only if it is a positive linear functional on the Banach algebra $\left(L^1(\R, |c(\lambda)|^{-2}d\lambda)_e, \odot\right)$.

Therefore our theorem is  analogous to the classical Bochner's theorem.

We state the following corollary of the theorem:
\begin{corollary}
\label{image-subset-positive-definite}
 An even, continuous, bounded function $h$ on $\R$ which satisfies for all $g\in \mathcal S(\R)_e$, $$\int_\R h(\lambda)(g\odot g^\ast)(\lambda)|c(\lambda)|^{-2}\,d\lambda\geq 0$$  is a positive definite function on $\R$.
\end{corollary}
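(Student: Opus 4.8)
The plan is to combine Theorem~\ref{boch-group} with the classical fact that, for fixed $x\in S$, the spherical function $\lambda\mapsto\phi_\lambda(x)$ is the Fourier transform of a finite positive measure on $\R$. By Theorem~\ref{boch-group}, the hypothesis on $h$ yields a radial positive measure $\theta$ on $S$ with $\int_S\phi_0(x)\,d\theta(x)<\infty$ and $h=\what{\theta}$, i.e.
$$h(\lambda)=\int_S\phi_\lambda(x)\,d\theta(x)\qquad(\lambda\in\R),$$
the integral converging absolutely since $|\phi_\lambda(x)|\le\phi_0(x)$ on $S$ for real $\lambda$. As $h$ is already assumed continuous, it remains to check the inequality $\sum_{j,k=1}^{m}h(\lambda_j-\lambda_k)\xi_j\overline{\xi_k}\ge0$ for arbitrary real $\lambda_1,\dots,\lambda_m$ and complex $\xi_1,\dots,\xi_m$.

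The key input I would invoke is the representing measure of the spherical functions: for each $x\in S$ there is a symmetric positive measure $\nu_x$ on $\R$ with
$$\phi_\lambda(x)=\int_\R e^{i\lambda s}\,d\nu_x(s)\qquad(\lambda\in\R).$$
Indeed, $\phi_\lambda$ is, up to a change of variable, the Jacobi function attached to $S$, whose parameters lie in the range $\alpha\ge\beta\ge-\tfrac12$; in that range the generalized Mehler--Dirichlet representation holds and the representing measure is nonnegative — this belongs to the same positivity circle of ideas of Flensted-Jensen that already produced $K(\lambda,\mu,\nu)\ge0$ above (see \cite{Flensted-Jensen}, \cite{ADY}). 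Setting $\lambda=0$ gives, as a by-product, the mass identity $\nu_x(\R)=\phi_0(x)$.

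Granting this, the argument finishes quickly. Moving the finite sum inside the integral over $S$ (legitimate, each $|\phi_{\lambda_j-\lambda_k}(x)|\le\phi_0(x)$ is $\theta$-integrable) gives
$$\sum_{j,k=1}^{m}h(\lambda_j-\lambda_k)\xi_j\overline{\xi_k}=\int_S\Big(\sum_{j,k=1}^{m}\phi_{\lambda_j-\lambda_k}(x)\,\xi_j\overline{\xi_k}\Big)d\theta(x),$$
and for each fixed $x$,
$$\sum_{j,k=1}^{m}\phi_{\lambda_j-\lambda_k}(x)\,\xi_j\overline{\xi_k}=\int_\R\Big|\sum_{j=1}^{m}\xi_j e^{i\lambda_j s}\Big|^2 d\nu_x(s)\ge0$$
because $\nu_x\ge0$. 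Hence the left-hand side is $\ge0$ and $h$ is positive definite on $\R$. Equivalently, one may set $\Theta:=\int_S\nu_x\,d\theta(x)$, a finite positive measure of total mass $\int_S\phi_0(x)\,d\theta(x)<\infty$, and obtain $h(\lambda)=\int_\R e^{i\lambda s}\,d\Theta(s)$, exhibiting $h$ directly as the Fourier transform of a finite positive measure.

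The only ingredient that is not purely formal is the nonnegativity of $\nu_x$ (equivalently, the positive definiteness of $\lambda\mapsto\phi_\lambda(x)$); everything else — the single appeal to Theorem~\ref{boch-group}, the finite interchange of sum and integral, the mass identity — is routine. I therefore expect the point requiring care to be the precise sourcing of that positivity in the Jacobi-function/Damek--Ricci literature; a heavier alternative would deduce it from the already-established inequality $K(\lambda,\mu,\nu)\ge0$ together with the self-duality of the associated Chébli--Trimèche hypergroup on $\R$.
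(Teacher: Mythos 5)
Your argument is correct, but it routes the key positivity through a different lemma than the paper does. Both proofs begin identically, invoking Theorem~\ref{boch-group} to write $h(\lambda)=\int_S\phi_\lambda(x)\,d\theta(x)$ with $\theta\geq 0$ radial and $\int_S\phi_0\,d\theta<\infty$. The paper then applies the Abel transform to the \emph{measure}: $\mathcal A\theta$ is a positive measure on $\R$ (immediately, since $\theta\geq 0$ and the Abel transform is integration against a positive weight), its total mass is $\wtilde{\mathcal A\theta}(0)=\what{\theta}(0)=\int_S\phi_0\,d\theta<\infty$, and $\wtilde{\mathcal A\theta}=\what{\theta}=h$, so $h$ is the Euclidean Fourier transform of a finite positive measure and hence positive definite by the easy direction of the classical Bochner theorem. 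You instead fix $x$ and use the positive definiteness of $\lambda\mapsto\phi_\lambda(x)$, i.e.\ the nonnegativity of the Mehler--Dirichlet representing measure $\nu_x$ for the Jacobi functions attached to $S$ (parameters $\alpha=(m+k-1)/2\geq\beta=(k-1)/2\geq-\tfrac12$, so the appeal to Flensted-Jensen--Koornwinder is legitimate; the paper itself records this fact in its remarks on $\mathcal P_0$, though it does not use it here). Your mass identity $\nu_x(\R)=\phi_0(x)$ and the bound $|\phi_{\lambda_j-\lambda_k}(x)|\leq\phi_0(x)$ justify all interchanges, so the proof closes. The trade-off is that the paper's route gets positivity of the representing measure for free from positivity of $\theta$, whereas yours imports a nontrivial special-function fact; in compensation your version makes explicit that $h=\wtilde{\Theta}$ with $\Theta=\int_S\nu_x\,d\theta(x)$, which is essentially the same measure as $\mathcal A\theta$, since $\nu_x$ is the Abel transform of the normalized orbital measure at $x$. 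Either way the conclusion and the quantitative content ($\Theta(\R)=\int_S\phi_0\,d\theta\leq h(0)$) agree.
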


\begin{proof}[Proof of Corollary]
 Suppose $h$ is an even, continuous, bounded function on $\R$ which satisfies the condition of the theorem above. Then by the Theorem \ref{boch-group} there exists a radial positive measure $\theta$ on $S$ such that $\what{\theta}(\lambda)=h(\lambda)$ for all $\lambda\in \R$. The measure $\theta$ satisfies $\int_S\phi_0(x)\,d\theta(x)<\infty$. We consider the Abel transform $\mathcal A\theta$ of $\theta$ defined by $$d\mathcal A\theta(a_t)= e^{-\rho t}\int_N \, d\theta(na_t) \text{ where } a_t=e^t.$$Then $\mathcal A\theta$ is a measure on $\R$ having the property  that 
\begin{equation}\label{triangle}
 \wtilde{\mathcal A\theta}(\lambda):=\int_\R e^{-i\lambda t}\,d\mathcal A\theta(a_t)=\what{\theta}(\lambda).                                                                                                                                    \end{equation}
 From the condition $\int_S\phi_0(x)\,d\theta(x)<\infty$ and equation (\ref{triangle}) it follows that the measure $\mathcal A\theta$ is finite and hence its Fourier transform $\wtilde{\mathcal A\theta}$ is a positive definite function on $\R$. But $\wtilde{\mathcal A\theta}(\lambda)=\what{\theta}(\lambda)=h(\lambda)$. Therefore  $h$ is a positive definite function on $\R$.
\end{proof}

Let $\mathcal P_0$ be the set of all even, continuous, bounded functions $h$ on $\R$ such that for all $g\in \mathcal S(\R)_e$ $$\int_\R h(\lambda)(g\odot g^\ast)(\lambda)|c(\lambda)|^{-2}\,d\lambda\geq 0.$$ Also let $\mathcal P$ be the set of all positive definite functions on $\R$. Then using the theorem above we have the following partial informations about the set $\mathcal P_0$:
\begin{enumerate}
\item By the corollary above we have $\mathcal P_0\subseteq \mathcal P$.
\item For each fixed $x_0\in S$, the function $\lambda\mapsto \phi_\lambda(x_0)\in \mathcal P_0$. In particular $\mathcal P_0$ contains positive constants. Also for each fixed $x_0\in S$, the function $\lambda\mapsto \phi_\lambda(x_0)$ is a positive definite function on $\R$, which can also be concluded from its integral representation.
\item Let us consider the heat kernel $p_t$ on $S$. It is a radial, nonnegative function on $S$ such that $\what{p_t}(\lambda)=e^{-t(\rho^2+ \lambda^2)}$ (see \cite{ADY}). This fact together with the theorem above implies that for each $t>0$, the function $\lambda\mapsto e^{-t(\rho^2 + \lambda^2)}\in\mathcal P_0$.
\item If $\beta_1, \beta_2\in \mathcal P_0$ then it follows that $\beta_1 +\beta_2, \beta_1\beta_2, c\beta_1\in\mathcal P_0$ for any positive constant $c$.
\end{enumerate}

\begin{remark} The condition of the Theorem \ref{boch-group} on the measure i.e. $\int_G \phi_0(x)\,d\theta(x)<\infty$ is due to technical reason. A finite measure $\theta$ always satisfies the condition $\int_G \phi_0(x)\,d\theta(x)<\infty$, since $|\phi_0(x)|\leq 1$ for all $x\in S$. Spherical transform of a finite positive measure exists on $S_1$ as $\phi_\lambda\in L^\infty$ if and only if $\lambda\in S_1$. Then it is easy to prove that the spherical transform $\what{\theta}$ is  analytic on $S_1^0$, continuous on $S_1$ and satisfies the positive definite like condition (as stated in the Theorem \ref{boch-group}). For the converse we conjecture the following:

An even, bounded function $h$, which is analytic on the interior of $S_1$ and continuous on $S_1$ is the spherical transform of a radial, finite, positive measure $\theta$ on $S$ if and only if for all $g\in \mathcal S(\R)_e$ $$\int_\R h(\lambda)(g\odot g^\ast)(\lambda)|c(\lambda)|^{-2}\,d\lambda\geq 0.$$ 
\end{remark}

\begin{proof}[Proof of the Theorem \ref{boch-group}]
The necessity of the condition is proved in Proposition \ref{measure-group}.

For the sufficiency  we let $h$ be an even, continuous, bounded function on $\R$ which satisfies the condition above. We define a  linear functional $T:\mathcal S(\R)_e\rightarrow \C$ by $$T(g)=c_0\int_\R h(\lambda)g(\lambda)|c(\lambda)|^{-2}\, d\lambda \text{ for all } g\in \mathcal S(\R)_e.$$
 This linear functional exists and continuous by the boundedness of $h$. Using this we also define a continuous linear functional $\wtilde{T}:\mathcal \mathcal \mathcal \mathcal \mathcal C^2(S)^\sharp\rightarrow \C$ by $$\wtilde{T}(f)=T(\what{f}), \text{ for all } f\in \mathcal \mathcal \mathcal \mathcal \mathcal C^2(S)^\sharp.$$This linear functional is well defined and continuous by the Schwartz space isomorphism theorem (see Theorem  \ref{sch-iso-na}).
From the hypothesis we have $$T(g\odot g^\ast)\geq 0 \text{ for all } g\in \mathcal S(\R)_e.$$ This implies that $T(\what{\alpha}\odot (\what{\alpha})^\ast)\geq 0$ for all $\alpha\in \mathcal \mathcal \mathcal \mathcal \mathcal C^2(S)^\sharp$. That is $T(\what{\alpha\overline{\alpha}})\geq 0$ by equation (\ref{newproduct}), since  $\what{\overline{\alpha}}(\lambda)=(\what{\alpha})^\ast(\lambda)$. This condition is equivalent to $$\wtilde{T}(\alpha\overline{\alpha})\geq 0 \text{ for all } \alpha\in \mathcal \mathcal \mathcal \mathcal \mathcal C^2(S)^\sharp.$$

We claim that:  $$\wtilde{T}(\alpha)\geq 0 \text{ for all }\alpha\in \mathcal \mathcal \mathcal \mathcal \mathcal C^2(S)^\sharp \text{ with }\alpha\geq 0.$$ 
To prove the claim, we first show that $\{\alpha\overline{\alpha}\mid \alpha\in C_c^\infty(S)^\sharp\}$ is dense in $\{\alpha\geq 0\mid \alpha\in C_c^\infty(S)^\sharp\}$. For this we let $\psi$ be a positive function in $C_c^\infty(S)^\sharp$ and suppose $\psi(x)=0$ for $r(x)>a$. Let $\gamma$ be a compactly supported $C^\infty$ function on $\R$ with $\gamma(t)=1$ for $|t|\le a$. We extend $\gamma$ as a radial function to $S$. We define $$\psi_m(x)=\gamma(x)\sqrt{\psi(x)+\frac 1m}.$$Then $\psi_m\in C_c^\infty(S)^\sharp$ and $$\psi_m^2(x)=\psi_m(x)\overline{\psi_m(x)}=\gamma(x)^2\left(\psi(x)+\frac 1m\right)\rightarrow \psi(x)$$ in the topology of $C_c^\infty(S)^\sharp$. Therefore $\wtilde{T}(\alpha)\geq 0$ for all $\alpha\in C_c^\infty(S)^\sharp$ with $\alpha\geq 0$. Now we let $\alpha\in \mathcal \mathcal \mathcal \mathcal \mathcal C^2(S)^\sharp$ such that $\alpha\geq 0$. Then there exists a sequence $\alpha_n\in C_c^\infty(S)^\sharp$ with $\alpha_n\geq 0$ such that $\alpha_n\rightarrow \alpha$ in $\mathcal \mathcal \mathcal \mathcal \mathcal C^2(S)^\sharp$. Since each $\wtilde{T}(\alpha_n)\geq 0$ it follows that $\wtilde{T}(\alpha)\geq 0$. Hence the claim is established.

Therefore $\alpha\mapsto \wtilde{T}(\alpha)$ is a positive linear functional on $\mathcal \mathcal \mathcal \mathcal \mathcal C^2(S)^\sharp$. By Riesz representation theorem there is a radial positive measure $\theta$ on $S$ such that 
 $$\wtilde{T}(\alpha)=\int_S \alpha(x)\,d\theta(x) \text{ for all }\alpha\in C_c^\infty(S)^\sharp.$$
That is $$c_0\int_\R h(\lambda)\what{\alpha}(\lambda)|c(\lambda)|^{-2}\,d\lambda=\int_S\alpha(x)\,d\theta(x)\text{ for all }\alpha\in C_c^\infty(S)^\sharp.$$
But  $h$ is such that the linear functional $\alpha\mapsto \int_\R h(\lambda)\what{\alpha}(\lambda)|c(\lambda)|^{-2}\,d\lambda$ extends to $\mathcal \mathcal \mathcal \mathcal \mathcal C^2(S)^\sharp$. Therefore 
$$c_0\int_\R h(\lambda)\what{\alpha}(\lambda)|c(\lambda)|^{-2}\,d\lambda=\int_S\alpha(x)\,d\theta(x)\text{ for all }\alpha\in \mathcal \mathcal \mathcal \mathcal \mathcal C^2(S)^\sharp.$$
That is 
\begin{equation}\label{Riesz}
\int_\R h(\lambda)\what{\alpha}(\lambda)|c(\lambda)|^{-2}\,d\lambda=\int_S\left(\int_\R\what{\alpha}(\lambda)\phi_\lambda(x)|c(\lambda)|^{-2}\,d\lambda\right)\,d\theta(x) \text{ for all } \alpha\in \mathcal \mathcal \mathcal \mathcal \mathcal C^2(S)^\sharp.
\end{equation}

We shall show that the measure $\theta$ satisfies $\int_S\phi_0(x)\, d\theta(x)<\infty$. For that we consider the heat kernel $$p_t(x)=\int_\R e^{-t(\lambda^2 + \rho^2)}\phi_\lambda(x)|c(\lambda)|^{-2}\,d\lambda.$$ This is a radial function on $S$ satisfies $p_t(x)\ge 0$ for all $x\in S$ and $\int_Sp_t(x)\,dx=1$. Let us define $\gamma_n(\lambda)=\frac{1}{p_n(e)}e^{-n(\lambda^2 + \rho^2)}$. Then it follows that $\int_S \frac{p_n(x)}{p_n(e)}\phi_\lambda(x)\,dx= \gamma_n(\lambda)$ and $\int_\R\gamma_n(\lambda)|c(\lambda)|^{-2}\, d\lambda=1$. Now our claim is that for any $\beta>0$, $$\int_{|\lambda|\ge\beta}\gamma_n(\lambda)|c(\lambda)|^{-2}\,d\lambda \rightarrow 0 \text{ as } n\rightarrow\infty.$$ Let $\beta>0$ fixed and choose $\alpha>0$ such that $\beta>\alpha$. We have $p_n(e)=\int_\R e^{-n(\lambda^2 +\rho^2)}|c(\lambda)|^{-2}\,d\lambda$. Then $$\begin{array}{lll}
p_n(e) &\ge &\int_{-\alpha}^{\alpha}e^{-n(\lambda^2 + \rho^2)}|c(\lambda)|^{-2}\,d\lambda \\ \\
& \ge & e^{-n(\alpha^2 +\rho^2)}\int_{-\alpha}^{\alpha}|c(\lambda)|^{-2}\,d\lambda.                                                                                                                                                                                                                                                                                                                                                                                                                                                                                                                                                                                                                                                                                                                                                                                
                                                                                                                                                                                                                                                                                                                                                                                                                                                                                                                                                                                                                                                                                                                                                                                                                                             \end{array}
$$  
This implies that $p_n(e)\ge C_\alpha \,e^{-n(\alpha^2 +\rho^2)}$ where $C_\alpha$ is a positive constant depends only on $\alpha$. Now $$\int_{|\lambda|\ge\beta}e^{-n(\lambda^2+\rho^2)}|c(\lambda)|^{-2}\,d\lambda\leq e^{-(n-1)(\beta^2 + \rho^2)}\int_\R e^{-(\lambda^2 +\rho^2)}|c(\lambda)|^{-2}\,d\lambda = D_\beta\, e^{-n(\beta^2 + \rho^2)}$$ where $D_\beta$ is a positive constant depends only $\beta$. Therefore $$\int_{|\lambda|\geq \beta}\gamma_n(\lambda)|c(\lambda)|^{-2}\,d\lambda\leq A e^{-n(\beta^2-\alpha^2)}$$ where $A$ is a positive constant depends only on $\beta, \alpha$. This establishes the claim.

Therefore since $h$ is continuous and bounded we have $$\lim_{n\rightarrow \infty}\int_\R h(\lambda)\gamma_n(\lambda)|c(\lambda)|^{-2}\,d\lambda=h(0).$$ We apply the  sequence $\{\gamma_n\}$ to equation (\ref{Riesz}) and take limit $n\rightarrow\infty$ and use Fatou's lemma  to get 
$$h(0)=\lim_{n\rightarrow \infty}\int_\R h(\lambda) \gamma_n(\lambda)|c(\lambda)|^{-2}\,d\lambda=\lim_{n\rightarrow\infty} \int_S \frac{p_n(x)}{p_n(e)}\,d\theta(x)\ge \int_S \lim_{n\rightarrow \infty}\frac{p_n(x)}{p_n(e)}\,d\theta(x).$$  Now 
$$\begin{array}{lll}
   \lim_{n\rightarrow\infty}\frac{p_n(x)}{p_n(e)}&=& \lim_{n\rightarrow\infty}\int_\R \frac{e^{-n(\lambda^2 +\rho^2)}}{p_n(e)}\phi_\lambda(x)|c(\lambda)|^{-2}\,d\lambda \\ \\
&=& \lim_{n\rightarrow\infty}\int_\R \gamma_n(\lambda)\phi_\lambda(x)|c(\lambda)|^{-2}\,d\lambda\\ \\
&=& \phi_0(x).
  \end{array}
$$
Therefore we get $$\int_S\phi_0(x)\,d\theta(x)\le h(0).$$

From equation (\ref{Riesz}) using Fubini's theorem we get, 
$$\int_\R h(\lambda)\what{\alpha}(\lambda)|c(\lambda)|^{-2}\,d\lambda=\int_\R \what{\alpha}(\lambda)|c(\lambda)|^{-2}\left(\int_S \phi_\lambda(x)\, d\theta(x)\right)\, d\lambda.
$$
But the equation above is true for every $\alpha\in \mathcal  C^2(S)^\sharp$. This implies that $$h(\lambda)=\int_S \phi_\lambda(x)\,d\theta(x)\text{ for all }\lambda\in\R.$$ This completes the proof.
\end{proof}

\begin{remark}
\begin{enumerate}
 \item Let $\theta$ be a finite, positive, radial measure on $S$. Then its spherical transform $\what{\theta}$ is obviously analytic on $S_1^0$, continuous on $S_1$ and satisfies the positive definite like condition (as stated in Theorem \ref{boch-group}). Conversely if we start with an even function which is analytic on $S_1^0$, continuous on $S_1$ and satisfies the positive definite like condition we can proceed as in the proof of the theorem to get a measure $\theta$ which satisfies the equation (\ref{Riesz}) but from this we are unable to prove that the measure $\theta$ is finite. If we could prove that the measure $\theta$ is finite then we would get $h(\lambda)=\int_S \phi_\lambda(x)\,d\theta(x)\text{ for all }\lambda\in\R.$ From analyticity and continuity the equality would hold on the strip $S_1$.

\item A Riemannian symmetric space $X$ of noncompact type can be
realized as a quotient space $G/K$ where $G$ is a connected
noncompact semisimple Lie group with finite centre and $K$ is a
maximal compact subgroup of $G$. Also a symmetric space $X$ is an $NA$ group and radial functions of that $NA$ group are $K$-biinvariant functions on $G$. Therefore the theorems proved in this article for radial functions on $NA$ group is also true for $K$-biinvariant functions on real rank one noncompact, connected, semisimple Lie group $G$ with finite centre. The Theorem \ref{boch-group} is new in the real rank one symmetric space case also.
\end{enumerate}
\end{remark}

\begin {thebibliography}{99}

\bibitem{ADY} Anker, J.-P.; Damek, E.; Yacoub, C. {\em Spherical analysis on harmonic $AN$ groups.}  Ann. Scuola Norm. Sup. Pisa Cl. Sci. (4)  23  (1996),  no. 4, 643--679 (1997). MR1469569 (99a:22014)
\bibitem{B1} Barker, W. H. {\em The spherical Bochner theorem on semisimple Lie groups.}  J. Functional Analysis  20  (1975), no. 3, 179--207. MR0399352 (53 \#3197)
\bibitem{B2} Barker, W. H. {\em Positive definite distributions on unimodular Lie groups.}  Duke Math. J.  43  (1976), no. 1, 71--79. MR0394064 (52 \#14870)

%\bibitem{Dooley} Dooley, A. H.; Zhang, G. {\em Spherical functions on harmonic extensions of $H$-type groups.}  J. Geom. Anal.  9  (1999),  no. 2, 247--255. MR1759447 (2001g:22015)
\bibitem{Flensted-Jensen} Flensted-Jensen, M.; Koornwinder, T. H. {\em Jacobi functions: the addition formula and the positivity of the dual convolution structure.}  Ark. Mat.  17  (1979), no. 1, 139--151. MR0543509 (80j:33015)
\bibitem {Gelfand} Gel'fand, I. M.; Vilenkin, N. Ya. {\em Generalized functions.} Vol. 4: Applications of harmonic analysis. Academic Press, 1964 MR0173945 (30 \#4152)
\bibitem{Gr}  Graczyk, P.; L{\oe}b, J.-J. {\em Bochner and Schoenberg theorems on symmetric spaces in the complex case.}  Bull. Soc. Math. France  122  (1994),  no. 4, 571--590. MR1305670 (95m:43008)
\bibitem{Pus} Pusti, S.  {\em An Analogue of Krein's  theorem for semisimple Lie groups.} To appear in Pacific J. Math.
%\bibitem{RS} Ray, S. K; Sarkar, R. P. {\em Fourier and Radon transform on Harmonic $NA$ group} Trans. Amer. Math. Soc. 361(2009), no. 8, 4269--4297. MR2500889 (2010a:43018) 

\bibitem{AS1} Sitaram, A. {\em Positive definite distributions on $K\backslash G/K$ II.} J. Indian Math. Soc. (N.S.) 42 (1978), no. 1-4, 95-104 (1979). MR0558985 (81e:43024)

\end{thebibliography}

\end{document}